\newcommand{\nc}{\newcommand}
\nc{\bC}{\bold{C}} \nc{\bN}{\Bbb{N}} \nc{\cF}{\mathcal{F}}
\nc{\cE}{\mathcal{E}} \nc{\cR}{\mathcal{R}} \nc{\cM}{\mathcal{M}}
\nc{\al}{\alpha} \nc{\bt}{\beta} \nc{\gm}{\gamma} \nc{\dl}{\delta}
\nc{\om}{\omega} \nc{\sg}{\sigma} \nc{\Sg}{\Sigma} \nc{\vf}{\varphi}
\nc{\ve}{\varepsilon} \nc{\os}{\overset} \nc{\ol}{\overline}
\nc{\ul}{\underline} \nc{\us}{\underset} \nc{\sbs}{\subset}
\nc{\bsl}{\backslash} \nc{\Ra}{\Rightarrow}
\nc{\lra}{\longrightarrow} \nc{\all}{\allowdisplaybreaks}
\nc{\Codes}{\operatorname{{\bold{Codes}}}}
\nc{\RegMono}{\operatorname{\mathcal{R}{\rm{eg}\mathcal{M}{\rm{ono}\!}}}}
\nc{\RegEpi}{\operatorname{\mathcal{R}{\rm{eg}\mathcal{E}{\rm{pi}\!}}}}
\nc{\Mn}{\operatorname{\mathcal{M}{\rm{ono}\!}}}
\nc{\Ep}{\operatorname{\mathcal{E}{\rm{pi}\!}}}
\nc{\Rg}{\operatorname{\mathcal{R}{\rm{eg}\!}}}
\nc{\Ob}{\operatorname{Ob\!}}
\numberwithin{equation}{section}
\newtheorem{theo}{\ \ \ Theorem}[section]
\newtheorem{lem}[theo]{\ \ \ Lemma}
\newtheorem{prop}[theo]{\ \ \ Proposition}
\newtheorem{cor}[theo]{\ \ \ Corollary}
\theoremstyle{definition}
\theoremstyle{remark}
\begin{document}

\title[]
{On sums of powers of natural numbers}

\author{Eteri Samsonadze}

\maketitle

 \begin{abstract}
The problem of finding the sum of a polynomial's values is considered. In particular, for any $n\geq 3$, the explicit formula for the sum of the $n$th powers of natural numbers $S_n=\sum_{x=1}^{m}x^{n}$ is proved:
 $$\sum_{x=1}^{m}x^{n}=(-1)^{n}m(m+1)(-\frac{1}{2}+\sum_{i=2}^{n}a_i(m+2)(m+3)...(m+i)),$$
here $a_i=\frac{1}{i+1}\sum_{k=1}^{i}\frac{(-1)^{k}k^{n}}{k!(i-k)!}$, $(i=2,3,...,n-1)$, $a_n=\frac{(-1)^n}{n+1}$. Note that this formula does not contain Bernoulli numbers.

 \bigskip

\noindent{\bf Key words and phrases}: powers of natural numbers; polynomial; binomial coefficients.

\noindent{\bf 2020  Mathematics Subject Classification}: 40G99,  05A19, 11B68, 05A10.
\end{abstract}
 
 \section{Introduction}
For $n=1, 2, 3, 4$, the explicit formulas for the sum of powers of natural numbers  
$$S_n=\sum_{x=1}^{m}x^{n}$$ 
were well-known already in the ancient times. In 1613, Faulhaber published a paper, where some properties of the function  $S_n(m)$ were studied and, moreover, the explicit formulas for high odd values of $n$ were given \cite{F}. Nowadays, for calculating $S_n$, there are various recurrent formulas, geometric and matrix methods, as well as formulas that are valid for arbitrary $n$, but contain Bernoulli numbers (see, e.g., \cite{BV}, \cite{E}, \cite{Ge}, \cite{Ka}, \cite{MS}, \cite{Ti}, \cite{T}, \cite{W}, \cite{K}). In \cite{GR}, the explicit formulas for $S_n$ are given for the cases $n=1, 2,...,7$.

  
  In the present work,  we give an explicit formula for $S_n$, for an arbitrary natural $n$; this formula does not contain Bernoulli numbers. To this end, we first find the explicit formula for the function $$g(m)=\sum_{x=1}^{m}f(x),$$ where $f(x)$ is a polynomial. The found formula implies that $g(m)$ is a polynomial and is divisible by $m$.
  
   Applying again the found formula for $g(m)$, the explicit formula for the sum of the $n$th powers of natural numbers is obtained:
 \begin{equation}
 \sum_{x=1}^{m}x^{n}=(-1)^{n}\sum_{i=1}^{n}a_im(m+1)(m+2)...(m+i),
 \end{equation}
where  
\begin{equation}
a_i=\frac{1}{i+1}\sum_{k=1}^{i}\frac{(-1)^{k}k^{n}}{k!(i-k)!},
\end{equation}
 $(i=1, 2,...,n)$.  Note that this formula does not contain Bernoulli numbers. Note also that, for the same sum, the following formula was given without proof in our earlier paper \cite{S}:
$$ \sum_{x=1}^{m}x^{n}=\sum_{i=1}^{n}\sum_{k=1}^{i}\frac{(-1)^{k+n}k^{n}m(m+1)(m+2)...(m+i)C_i^k}{(i+1)!},$$
where $C_i^k$ denotes the binomial coefficient ${\begin{pmatrix}i\\k\\\end{pmatrix}}$.
 
 Taking the fact that the principal term of the polynomial $$S_n(m)=\sum_{x=1}^{m}x^{n}$$ is $\frac{1}{n+1}m^{n+1}$ \cite{K}, \cite{BSh}, the calculation of $a_n$ can be simplified. Namely, we obtain that 
 \begin{equation}
 a_n=\frac{(-1)^n}{n+1}.
 \end{equation}
  The above-mentioned formulas  imply that if $n\geq 3$, then 
$$\sum_{x=1}^{m}x^{n}=(-1)^{n}m(m+1)(-\frac{1}{2}+\sum_{i=2}^{n}a_i(m+2)(m+3)...(m+i)),$$
where $a_i$ is given by (1.2), for $i=2,3,...,n-1$, while $a_n$ is given by (1.3). 


\section{The sum of powers of natural numbers}

Below we give the formula for the sum of the values of a polynomial $f(x)$ in the points $x=1,2,...,m$. To this end, we first give the following lemma.

\begin{lem}
If the degree of a polynomial $f(x)$ is less than or equal to $n$, then the following equality holds:
\begin{equation}
f(x)=f(0)+ \sum_{i=1}^{n}d_ix(x+1)(x+2)...(x+i-1),
\end{equation}
where 
\begin{equation}
d_i=\sum_{k=o}^{i}\frac{(-1)^kf(-k)}{k!(i-k)!} \; \; \; \; (i=1,2,...,n).
\end{equation}
\end{lem}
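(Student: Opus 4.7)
\medskip

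\noindent\textbf{Proof plan.} The plan is to recognize that the polynomials $1,\; x,\; x(x+1),\; x(x+1)(x+2),\,\ldots,\,x(x+1)\cdots(x+n-1)$ have pairwise distinct degrees $0,1,2,\ldots,n$, and hence form a basis of the vector space of polynomials of degree at most $n$. Consequently there exist unique scalars $c_0,c_1,\ldots,c_n$ such that
$$f(x)=c_{0}+\sum_{i=1}^{n}c_{i}\,x(x+1)(x+2)\cdots (x+i-1).$$
Evaluating at $x=0$ kills every term in the sum and yields $c_{0}=f(0)$, which already matches the constant term of the claimed formula.

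To pin down $c_{i}$ for $i\geq 1$, I would substitute the negative integer values $x=-k$ for $k=1,2,\ldots,n$. The key observation is that the rising-factorial $x(x+1)\cdots(x+i-1)$ contains the factor $(x+k)$ whenever $i>k$, so it vanishes at $x=-k$, while for $i\leq k$ it evaluates to $(-k)(-k+1)\cdots(-k+i-1)=(-1)^{i}\,k!/(k-i)!$. This produces a lower-triangular linear system
$$f(-k)-f(0)=\sum_{i=1}^{k}(-1)^{i}\,\frac{k!}{(k-i)!}\,c_{i},\qquad k=1,2,\ldots,n,$$
which has a unique solution, so it suffices to verify that the quantities $d_{i}$ defined by (2.2) solve it.

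The remaining step is the computation: substitute the definition of $d_{i}$ into the right-hand side of (2.1), interchange the two summations, and reduce the resulting inner sums to the elementary identity $\sum_{m=0}^{\ell}(-1)^{m}\binom{\ell}{m}=0$ for $\ell\geq 1$ (equivalently, $\sum_{k=j}^{i}\frac{(-1)^{k-j}}{(i-k)!(k-j)!}=\delta_{ij}$). Applied with $\ell=i$, this identity absorbs the $f(0)$ term for every $i\geq 1$; applied with $\ell=i-j$ it collapses the double sum to $c_{i}$, yielding $d_{i}=c_{i}$. The only real obstacle is bookkeeping on index shifts between $k$ and $i-k$; once the sums are swapped correctly, the binomial-cancellation identity does all the work.
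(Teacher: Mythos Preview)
Your proposal is correct and follows essentially the same route as the paper. The only cosmetic difference is in how uniqueness of the expansion is justified: the paper interpolates, choosing the $l_i$ so that the auxiliary polynomial $h(x)=l_0+\sum_{i=1}^{n}l_i\,x(x+1)\cdots(x+i-1)$ agrees with $f$ at the $n+1$ points $x=0,-1,\ldots,-n$, whereas you invoke the vector-space basis formed by the rising factorials; both arguments lead to the identical lower-triangular system obtained by evaluating at $x=-k$, and both finish by verifying the stated $d_i$ via the alternating binomial identity $\sum_{m=0}^{\ell}(-1)^{m}\binom{\ell}{m}=0$.
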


\begin{proof}
Consider the polynomial
\begin{equation}
h(x)=l_0+\sum_{i=1}^{n}l_ix(x+1)(x+2)...(x+i-1),
\end{equation}
and choose coefficients $l_0$, $l_1$, ..., $l_n$ in such a way that the values of the polynomials $f(x)$ and $h(x)$ coincide in the $(n+1)$ points:
$$f(-j)=h(-j)\; \; \; \; (j=0,1,2,...,n).$$
Then $$f(x)\equiv h(x),$$ whence 
\begin{equation}
f(-j)=l_0-l_1j+l_2j(j-1)+...+(-1)^jl_jj(j-1)(j-2)\cdot ... \cdot 2\cdot 1 .
\end{equation}
$(j=0,1,2,...,n)$.
Applying the well-known formula
$$C_k^0-C_k^1+C_k^2+...+(-1)^kC_k^k=0,$$
($k\in \mathbb{N}$) where $C_i^j$ denotes the binomial coefficient ${\begin{pmatrix}j\\i\\\end{pmatrix}}$ , it is not hard to verify that the solution of the system (2.4) is 
\vskip+2mm
$$l_0=f(0), \; \; \; l_i=\sum_{k=0}^{i}\frac{(-1)^kf(-k)}{k!(i-k)!} \; \; \; (i=1,2,...,n).$$
\vskip+2mm
This implies formulas (2.1) and (2.2).
\end{proof}
\vskip+3mm
Consider the function 
$$g(m)=\sum_{x=1}^{m}f(x).$$
As is well-known \cite{V}, 
\begin{equation}
\sum_{x=1}^{m}x(x+1)(x+2)...(x+i-1)=\frac{1}{i+1}m(m+1)(m+2)...(m+i).
\end{equation}
Therefore, 
Lemma 2.1 implies
\begin{theo}
\begin{equation}
\sum_{x=1}^{m}f(x)=mf(0)+\sum_{i=1}^{n}\frac{d_i}{i+1}m(m+1)(m+2)...(m+i),
\end{equation}
where $d_i$ ($i=1,2,...,n)$ is given by formula (2.2).
\end{theo}



Theorem 2.2 implies
\begin{cor}
The function $g(m)$ is a polynomial, and is divisible by $m$.
 \end{cor}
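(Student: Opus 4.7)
The plan is to read the conclusion straight off the formula furnished by Theorem 2.2, so the ``proof'' is really just an observation about the structure of the right-hand side of (2.6).

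First I would note that, with $n$ chosen as (or larger than) the degree of $f$, Lemma 2.1 applies, so Theorem 2.2 gives
\[
g(m)=\sum_{x=1}^{m}f(x)=mf(0)+\sum_{i=1}^{n}\frac{d_i}{i+1}\,m(m+1)(m+2)\cdots(m+i),
\]
where the $d_i$ are the constants in (2.2) (in particular, they do not depend on $m$). The right-hand side is a finite $\mathbb{R}$-linear combination of polynomials in $m$, hence is itself a polynomial in $m$. This settles the first assertion.

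Next I would factor $m$ out of every summand on the right-hand side: the term $mf(0)$ obviously has $m$ as a factor, and each product $m(m+1)(m+2)\cdots(m+i)$ in the sum begins with the factor $m$. Hence
\[
g(m)=m\!\left(f(0)+\sum_{i=1}^{n}\frac{d_i}{i+1}(m+1)(m+2)\cdots(m+i)\right),
\]
and the bracketed expression is a polynomial in $m$; this shows $m\mid g(m)$ in the polynomial ring.

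There is essentially no obstacle here; the content of the corollary is entirely absorbed into Theorem 2.2. The only point worth flagging is the innocuous one that the identity (2.6) was derived for positive integers $m\geq 1$, whereas ``polynomial'' and ``divisible by $m$'' are statements about formal polynomials. But since both sides of (2.6) are polynomial expressions in $m$ that agree on infinitely many integers, they agree as polynomials, and the factorization above is valid in $\mathbb{R}[m]$.
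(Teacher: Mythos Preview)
Your proposal is correct and matches the paper's approach exactly: the paper simply states that Corollary 2.3 follows from Theorem 2.2, and your write-up spells out the obvious reading of (2.6) that justifies this. Your added remark about passing from agreement on positive integers to identity in $\mathbb{R}[m]$ is a fair clarification the paper leaves implicit.
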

 
 Corollary 2.3 implies the well-known fact that the function $S_n(m)=\sum_{x=1}^{m}x^{n}$ is a polynomial \cite{F}. 
\vskip+2mm
Theorem 2.2 implies the explicit formula for $S_n(m)$, for any natural $n$. Note that it does not contain Bernoulli numbers.

\begin{theo} For any natural $n$, one has the equality
 \begin{equation}
 \sum_{x=1}^{m}x^{n}=(-1)^{n}\sum_{i=1}^{n}a_im(m+1)(m+2)...(m+i),
 \end{equation}
where  
\begin{equation}
a_i=\frac{1}{i+1}\sum_{k=1}^{i}\frac{(-1)^{k}k^{n}}{k!(i-k)!},
\end{equation}
 $(i=1,2,...,n)$.
\end{theo} 
 \vskip+2mm
 From formula (2.7) it follows the well-known fact that 
 the polynomial $$S_n(m)=\sum_{x=1}^{m}x^{n}$$ is divisible by $m(m+1)$, for any $n\in \mathbb{N}$ \cite{F}. 
 
 Formula (2.8) implies that 
 \begin{equation}
 a_1=-\frac{1}{2}.
 \end{equation}
 Taking formula (2.7) and the fact that the principal term of the polynomial $S_n(m)$ is $\frac{1}{n+1}m^{n+1}$ \cite{K}, \cite{BSh},  the calculation of $a_n$ can be simplified. Namely, we obtain that
  \begin{equation}
 a_n=\frac{(-1)^n}{n+1}.
 \end{equation}

 From Theorem 2.4, formulas (2.9) and (2.10), we obtain
 
 \begin{theo}
 Let $n\geq 3$. Then one has 
 $$\sum_{x=1}^{m}x^{n}=(-1)^{n}m(m+1)(-\frac{1}{2}+\sum_{i=2}^{n}a_i(m+2)(m+3)...(m+i)),$$
 where $a_i$ is given by formula (2.8) for $i=2,3,...,n-1$, while $a_n$ is given by (2.10).
 \end{theo}
 
 Finally note that since formula (2.8) implies that 
 $$a_n=\frac{1}{(n+1)!}\sum_{k=1}^{n}(-1)^{k}C_n^k k^n,$$
 formula (2.10) implies the following combinatorial identity.
 
 \begin{prop}
 For any natural number $n$, one has
 $$\sum_{k=1}^{n}(-1)^{k}C_n^k k^n=(-1)^nn!.$$
 \end{prop}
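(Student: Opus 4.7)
The plan is to combine formulas (2.8) and (2.10) and rewrite the resulting equality so that the combinatorial identity pops out. All the work has already been done in the earlier theorems; Proposition 2.6 is, in effect, a reformulation of (2.10).

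First, I would specialize formula (2.8) to the case $i=n$, which gives
$$a_n=\frac{1}{n+1}\sum_{k=1}^{n}\frac{(-1)^{k}k^{n}}{k!(n-k)!}.$$
Next, I would rewrite the denominator using the binomial coefficient: since $\frac{1}{k!(n-k)!}=\frac{1}{n!}C_n^k$, factoring $\frac{1}{n!}$ out of the sum yields
$$a_n=\frac{1}{(n+1)!}\sum_{k=1}^{n}(-1)^{k}C_n^k k^{n},$$
which is precisely the reformulation already recorded in the paragraph preceding the statement.

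Finally, I would invoke formula (2.10), which asserts $a_n=\frac{(-1)^n}{n+1}$, and equate the two expressions for $a_n$:
$$\frac{1}{(n+1)!}\sum_{k=1}^{n}(-1)^{k}C_n^k k^{n}=\frac{(-1)^n}{n+1}.$$
Clearing the factorial by multiplying both sides by $(n+1)!$ produces the desired identity $\sum_{k=1}^{n}(-1)^{k}C_n^k k^{n}=(-1)^n n!$.

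There is no real obstacle in this final deduction; the computation is purely mechanical. The substantive content was already expended in establishing (2.10), which combined Theorem 2.4 with the known leading-coefficient information $\frac{1}{n+1}m^{n+1}$ for $S_n(m)$. If one wanted a self-contained justification, the only care needed is to note that in (2.8) the summand at $k=0$ would contribute $0^n=0$ for $n\geq 1$, so that the range of summation $k=1,\dots,n$ agrees with the range $k=0,\dots,n$ appearing in the usual binomial expansion, and the passage to $C_n^k$ is valid without boundary corrections.
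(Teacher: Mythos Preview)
Your proposal is correct and follows exactly the paper's own derivation: the paper likewise specializes (2.8) to $i=n$, rewrites it as $a_n=\frac{1}{(n+1)!}\sum_{k=1}^{n}(-1)^{k}C_n^k k^n$, and then equates this with (2.10) to obtain the identity. Nothing further is needed.
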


\vskip+3mm

\textit{Author's address:}

\textit{Eteri Samsonadze,
Retd., I. Javakhishvili Tbilisi State University,}

\textit{1 Tchavchavadze Av., Tbilisi, 0179, Georgia}, 

\textit{e-mail: eteri.samsonadze@outlook.com} 

\begin{thebibliography}{99}


\bibitem{BSh} Z. I. Borevich, I. P. Shafarevich, Number theory, "Academic Press", New York, 1986.

\bibitem{BV} G. F. C. de Bruyn, J. M. de Villiers, Formulas for $l^p + 2^p + 3^p + ...+ n^p$, The Fibonacci Quarterly 32.3 (1994):271-76.

\bibitem{E} A. W. F. Edwards, A quick route to sums of powers, Amer. Math. Monthly 93 (1986),
451-455.

\bibitem{F} J. Faulhaber, Academia Algebroe, Darinnen die miraculosische Inventiones zu den
höchsten Cossen weiters continuirt und profitiert werden, call number QA154.8 F3 1631a f
MATHat Stanford University Libraries, Johann Ulrich Schönigs, Augspurg [sic], 1631


\bibitem{GR} I. S. Gradshtein, I. M. Ryzhik, Tables of integrals, sums, series, and products, "Nauka", Moscow, 1971.

\bibitem{Ge} I. Gessel and University of South Alabama Problem Group, A formula for power sums,
Amer. Math. Monthly 95 (1988), 961-962.

\bibitem{Ka} D. Kalman, Sums of Powers by Matrix Methods, The Fibonacci Quarterly 28.1 (1990):
60-71.

\bibitem{K} V. A. Kudryavcev, The summation of powers of natural numbers and Bernoulli numbers, 
"The Principal Publishing of Technical Literature and Monographs", Moscow, 1936 (in Russian).

\bibitem{MS} K. MacMillan, J. Sondow, Proofs of Power Sum and Binomial
Coefficient Congruences via Pascal’s Identity, The American Mathematical Monthly, 118(20110, N6, 549-551.





\bibitem{S} E. T. Samsonadze, On partition functions, {\em Trans. Tbilisi State University}, 204(1983), 76-90 (in Russian).

\bibitem{Ti} L. Tits, Sur la sommation des puissances numériques, Mathesis 37 (1923), 353-355.

\bibitem{T} B. Turner, Sums of Powers of Integers via the Binomial Theorem, Math. Magazine 53
(1980):92-96.

\bibitem{V} N. Ya. Vilenkin, Combinatorics, "Academic Press", New York, London, 1971.

\bibitem{W} J. Wiener, A Calculus Exercise for the Sums of Integer Powers,  Math. Magazine 65
(1992):249-51.



\end{thebibliography}
\end{document}